\documentclass[leqno,a4paper,12pt]{amsart}

\usepackage{amscd,amssymb,amsmath,amsfonts,amsthm, mathrsfs,
  graphicx,verbatim,hyperref,microtype,enumerate}
\usepackage[OT1,T1]{fontenc}
\usepackage[all]{xypic}
\usepackage{pdfpages}
\usepackage{tikz}
\usepackage[all]{xy}
\usepackage{tabularx}

\usepackage{color}

\newtheorem{theorem}{Theorem}[section]
\newtheorem{corollary}[theorem]{Corollary}

\newtheorem{proposition}[theorem]{Proposition}
\theoremstyle{definition}
\newtheorem{remark}[theorem]{Remark}
\newtheorem{definition}[theorem]{Definition}
\newtheorem{example}[theorem]{Example}

\numberwithin{equation}{section}
\newcommand{\abs}[1]{\left|#1\right|}

\def\CA{{\mathscr A}}
\def\CB{{\mathscr B}}


\def\tee{t}
\def\eye{i}

\newcommand{\cd}[2][]{\vcenter{\hbox{\xymatrix#1{#2}}}}

\newcommand{\ltwocell    }[3][0.5]{\ar@{}[#2] \ar@{=>}?(#1)+/r 0.2cm/;?(#1)+/l 0.2cm/^{#3}}
\newcommand{\rtwocell    }[3][0.5]{\ar@{}[#2] \ar@{=>}?(#1)+/l 0.2cm/;?(#1)+/r 0.2cm/^{#3}}
\newcommand{\dtwocell    }[3][0.5]{\ar@{}[#2] \ar@{=>}?(#1)+/u 0.2cm/;?(#1)+/d 0.2cm/^{#3}}
\newcommand{\utwocell    }[3][0.5]{\ar@{}[#2] \ar@{=>}?(#1)+/d 0.2cm/;?(#1)+/u 0.2cm/_{#3}}

\newcommand{\dltwocell   }[3][0.5]{\ar@{}[#2] \ar@{=>}?(#1)+/ur 0.2cm/;?(#1)+/dl 0.2cm/^{#3}}
\newcommand{\drtwocell   }[3][0.5]{\ar@{}[#2] \ar@{=>}?(#1)+/ul 0.2cm/;?(#1)+/dr 0.2cm/^{#3}}
\newcommand{\ultwocell   }[3][0.5]{\ar@{}[#2] \ar@{=>}?(#1)+/dr 0.2cm/;?(#1)+/ul 0.2cm/^{#3}}
\newcommand{\urtwocell   }[3][0.5]{\ar@{}[#2] \ar@{=>}?(#1)+/dl 0.2cm/;?(#1)+/ur 0.2cm/^{#3}}

\newcommand{\twosimp}[7]{
       \cd{
         & {#2} \ar[dr]^{#5} \ar@{}[d]|(0.6){#7} \\
        {#1} \ar[rr]_{#6} \ar[ur]^{#4} & & {#3} }
}

\newcommand{\monnerve}{
 \cd[@C-1em]{
    (\xcd \otimes \xbc) \otimes \xab \ar[rr]^-{\alpha} \ar[d]_{\xbcd \otimes 1} & &
    \xcd \otimes (\xbc \otimes \xab) \ar[d]^{1 \otimes \xabc} \\
    \xbd \otimes \xab \ar[r]_-{\xabd} & \xad & \xcd
    \otimes \xac \ar[l]^-{\xacd}
  }
}

\newcommand{\laxnerve}{
 \cd[@C-1em]{
    (\xcd \times \xbc) \times \xab \ar[rr]^-{\cong}
    \rtwocell{drr}{\xabcd} \ar[d]_{\xbcd \times 1} & &
    \xcd \times (\xbc \times \xab) \ar[d]^{1 \times \xabc} \\
    \xbd \times \xab \ar[r]_-{\xabd} & \xad & \xcd
    \times \xac \ar[l]^-{\xacd}
  }
}

\newcommand{\pent}[1][0.1cm]{
\xybox{<#1,0cm>:
    \POS (0,15)*+{\xc}="0", 
         (-14,5)*+{\xa}="1", 
         (-9,-12)*+{\xb}="2", 
         (9,-12)*+{\xd}="3", 
         (14,5)*+{\xe}="4",
         (0,0.5)*+{\xo}="C"
    \POS"1" \ar "0"^{\labelstyle \xac}|{}="01"
    \POS"1" \ar "2"_{\labelstyle \xab}|{}="12"
    \POS"2" \ar "3"_{\labelstyle \xbd}|{}="23"
    \POS"3" \ar "4"_{\labelstyle \xde}|{}="34"
    \POS"0" \ar "4"^{\labelstyle \xce}|{}="04"
}
}


\begin{document}

\title{The Catalan simplicial set}

\author[Buckley]{Mitchell Buckley}
\address{Department of Computing, Macquarie University NSW 2109, Australia}
\email{mitchell.buckley@mq.edu.au}

\author[Garner]{Richard Garner}
\address{Department of Mathematics, Macquarie University NSW 2109, Australia}
\email{richard.garner@mq.edu.au}

\author[Lack]{Stephen Lack}
\address{Department of Mathematics, Macquarie University NSW 2109, Australia}
\email{steve.lack@mq.edu.au}

\author[Street]{Ross Street}
\address{Department of Mathematics, Macquarie University NSW 2109, Australia}
\email{ross.street@mq.edu.au}




\date{\today}

\keywords{simplicial set; Catalan numbers; skew-monoidal category;
  nerve; quantum category} \subjclass[2000]{18D10; 18D05; 18G30;
  55U10; 05A15; 06A40; 17B37; 20G42; 81R50}

\thanks{The authors gratefully acknowledge the support of
Australian Research Council Discovery Grant DP130101969,
Buckley's Macquarie University Postgraduate Scholarship,
Garner's Australian Research Council Discovery Grant DP110102360, and
Lack's Australian Research Council Future Fellowship. }

\maketitle

\section{Introduction}

The $n$th Catalan number $C_n$, given explicitly by
$\frac{1}{n+1}\binom{2n}{n}$, is well-known to be the answer to many
different counting problems; for example, it is the number of
bracketings of an $(n+1)$-fold product.  Thus there are many $\mathbb
N$-indexed families of sets whose cardinalities are the Catalan
numbers; Stanley~\cite{StanleyEC2, StanleyAddendum} describes at
least 205 such.

A Catalan family of sets may bear extra structure that is invisible in
the mere sequence of Catalan numbers. For example, one presentation of
the $n$th Catalan set is as the set of functions $f \colon \{1, \dots,
n\} \to \{1, \dots, n\}$ which preserve order and satisfy $f(k)
\leqslant k$ for each $k$. The set of such functions is a monoid under
composition, and in this way we obtain the \emph{Catalan
  monoids}~\cite{Sol} which are of importance to combinatorial
semigroup theory. For another example, a result due to
Tamari~\cite{Tamari1962The-algebra} makes each Catalan set into a
lattice, whose ordering is most clearly understood in terms of
bracketings of words, as the order generated by the basic inequality
$(xy)z \leqslant x(yz)$ under substitution.

The first main objective of this paper is to describe another kind of
structure borne by Catalan families of sets.  We shall show how to
define functions between them in such a way as to produce a simplicial
set $\mathbb C$, which is the ``Catalan simplicial set'' of the title.
The simplicial structure can be defined in various ways, but the most
elegant makes use of what seems to be a new presentation of the
Catalan sets that relies heavily on the Boolean algebra $\mathbf 2$.

Simplicial sets are abstract, combinatorial entities, most often used
as models of spaces in homotopy theory, but flexible enough to also
serve as models of higher
categories~\cite{Lurie2009Higher,Verity2008Complicial}. Therefore, we
might hope that the Catalan simplicial set had some natural role to
play in homotopy theory or higher category theory.  Our second
objective in this paper is to affirm this hope, by showing that
the Catalan simplicial set has a \emph{classifying property} with
respect to certain kinds of categorical structure. More precisely, we
will consider simplicial maps from $\mathbb C$ into the nerves of
various kinds of higher category (the \emph{nerve} of such a structure
is a simplicial set which encodes its cellular data). We will see that:
\begin{enumerate}[(a)]
\item Maps from $\mathbb C$ to the nerve of a monoidal category $\mathscr V$ are the same thing
  as monoids in $\mathscr V$;
\item Maps from $\mathbb C$ to the nerve of a bicategory $\mathscr B$ are the same thing
  as monads in $\mathscr B$;
\item Maps from $\mathbb C$ to the \emph{pseudo} nerve of the monoidal
  bicategory $\mathrm{Cat}$ of categories and functors are the same
  thing as monoidal categories;
\item Maps from $\mathbb C$ to the \emph{lax} nerve of the monoidal
   bicategory $\mathrm{Cat}$ are the same thing as \emph{skew-monoidal
     categories}.
 \end{enumerate}

 Skew-monoidal categories generalise Mac~Lane's notion of monoidal
 category~\cite{ML1963} by dropping the requirement of invertibility
 of the associativity and unit constraints; they were introduced
 recently by Szlach\'anyi~\cite{Szl2012} in his study of bialgebroids,
 which are themselves an extension of the notion of quantum group. The
 result in (d) can be seen as a coherence result for the notion of
 skew-monoidal category, providing an abstract justification for the
 axioms. Thus the work presented here lies at the interface of several
 mathematical disciplines:
\begin{enumerate}[\ \ \textbullet\,]
\item combinatorics, in the form of the Catalan numbers;
\item algebraic topology, via simplicial sets and nerves;
\item quantum groups, through recent work on bialgebroids;
\item logic, through the distinguished role of the Boolean algebra $\mathbf{2}$; and
\item category theory.
\end{enumerate}

Nor is this the end of the story. Monoidal categories and
skew-monoidal categories can be generalised to notions of
\emph{monoidale} and \emph{skew monoidale} in a monoidal bicategory;
this has further relevance for quantum algebra, since Lack and Street
showed in~\cite{Smswqc} that quantum categories in the sense
of~\cite{QCat} can be described using skew monoidales. In a sequel to
this paper, we will generalise (c) and (d)  to prove that:
\begin{enumerate}[(a)]
\addtocounter{enumi}{4}
\item Maps from $\mathbb C$ to the pseudo nerve of a monoidal
  bicategory $\mathscr W$ are the same thing as monoidales in
  $\mathscr W$; and
\item Maps from $\mathbb C$ to the lax nerve of a monoidal bicategory
  $\mathscr W$ are the same thing as skew monoidales in $\mathscr W$.
\end{enumerate}
The results (a)--(f) use only the lower dimensions of the Catalan
simplicial set, and we expect that its higher dimensions in fact
encode \emph{all} of the coherence that a higher-dimensional monoidal
object should satisfy. We therefore hope also to show that:
\begin{enumerate}[(a)]
\addtocounter{enumi}{6}
\item Maps from $\mathbb C$ to the pseudo nerve of the monoidal
  tricategory $\mathrm{Bicat}$ of bicategories are the same thing as
  monoidal bicategories;
\item Maps from $\mathbb C$ to the homotopy-coherent nerve of the
  monoidal simplicial category $\infty\text-\mathrm{Cat}$ of
  $\infty$-categories are the same thing as monoidal
  $\infty$-categories in the sense of~\cite{LurieHA};
\end{enumerate}
together with appropriate skew analogues of these results.

Finally, a note on the genesis of this work. We have chosen to present
the Catalan simplicial set as basic, and its classifying properties as
derived. This belies the method of its discovery, which was to look
for a simplicial set with the classifying property (d); the link with
the Catalan numbers only later came to light. The notion that a
classifying object as in (d) might exist is based on an old idea of
Michael Johnson's on how to capture not only associativity but also
unitality constraints simplicially.  He reminded us of this in a
recent talk~\cite{MJ2013} to the Australian Category Seminar.

\section{The Catalan simplicial set}
\label{sec:catal-simpl-set}
In this section we define and investigate the Catalan simplicial set.
We begin by recalling some basic definitions. We write $\Delta$ for
the \emph{simplicial category}, whose objects are non-empty finite
ordinals $[n] = \{0,\dots,n\}$ and whose morphisms are
order-preserving functions, and write $\mathrm{SSet}$ 
for the category of presheaves
on $\Delta$. Objects $X$ of $\mathrm{SSet}$ are called {\em simplicial
  sets}; we think of them as glueings-together of discs, with the
$n$-dimensional discs in that glueing labelled by the set $X_n :=
X([n])$ of \emph{$n$-simplices} of $X$. We write $\delta_i \colon
[n-1]\to [n]$ and $\sigma_{i} \colon [n+1]\to [n]$ for the maps of
$\Delta$ defined by
\begin{equation*}
  \delta_i(x) = \begin{cases} x & \text{if } x < i \\ x+1 & \text{otherwise}
  \end{cases} \quad \text{and} \quad \sigma_i(x) = \begin{cases} x &
    \text{if } x \leqslant i \\ x-1 & \text{otherwise.}
  \end{cases}
\end{equation*}
The action of these morphisms on a simplicial set $X$ yields functions
$d_{i} \colon X_n \to X_{n-1}$ and $s_i \colon X_n \to X_{n+1}$, which
we call \emph{face} and \emph{degeneracy} maps. An $(n+1)$-simplex $x$
is called {\em degenerate} when it is in the image of some $s_i$, and
\emph{non-degenerate} otherwise. The face and degeneracy maps of a
simplicial set satisfy the following \emph{simplicial identities}:
\begin{equation*}
\begin{aligned}
d_i d_j &= d_{j-1} d_i \qquad \!\text{for $i < j$} \\
s_i s_j &= s_{j+1} s_i \qquad \text{for $i \leqslant j$}
\end{aligned} \qquad \quad
\begin{aligned}
d_i s_j &= \begin{cases}
s_{j-1} d_i & \text{for $i < j$} \\
\mathrm{id} & \text{for $i = j, j+1$} \\
s_j d_{i-1} & \text{for $i > j+1$;}
\end{cases}
\end{aligned}
\end{equation*}
and in fact, a simplicial set may be completely specified by giving
its sets of $n$-simplices, together with face and degeneracy maps satisfying
the simplicial identities.

\begin{definition}\label{def:catalan}
  The \emph{Catalan simplicial set} $\mathbb C$ has its $n$-simplices
  given by \emph{Dyck words} of length $2n+2$; these are strings
  comprised of $(n+1)$ $U$'s and $(n+1)$ $D$'s such that the $i$th $U$
  precedes the $i$th $D$ for each $1 \leqslant i \leqslant n+1$.
  The face maps $d_i \colon \mathbb C_n \to \mathbb C_{n-1}$ act on a
  word $W$ by deleting the $i$th $U$ and $i$th $D$; the degeneracy maps
  $s_i \colon \mathbb C_{n-1} \to \mathbb C_n$ act on a word $W$ by
  repeating the $i$th $U$ and $i$th $D$.
\end{definition}
  
The sets of Dyck words of length $2n$ are a Catalan family of
sets---corresponding to (i) or (r) in Stanley's
enumeration~\cite{StanleyEC2}---and so we have that $\abs{\mathbb C_n}
= C_{n+1}$, the $(n+1)$st Catalan number.

\begin{remark}
  The sets of $n$-simplices of $\mathbb C$ are not quite a Catalan
  family, due to the dimension shift causing us to omit the $0$th
  Catalan number. We may rectify this by viewing $\mathbb C$ as an
  \emph{augmented} simplicial
  set. 
  An augmented simplicial set is a presheaf on $\Delta_+$, the
  category of all finite ordinals and order-preserving maps; it is
  equally given by a simplicial set $X$ together with a set $X_{-1}$
  of $(-1)$-simplices and an ``augmentation'' map $d_0 \colon X_0 \to
  X_{-1}$ satisfying $d_0d_0 = d_0d_1 \colon X_1 \to X_{-1}$. By
  allowing $n$ to range over $\{-1\} \cup \mathbb N$ in the definition
  of the Catalan simplicial set $\mathbb C$, it becomes an augmented
  simplicial set with the property that its sets of $(n-1)$-simplices
  (for $n \in \mathbb N$) are a Catalan family.
\end{remark}

In order to understand the Catalan simplicial set as a simplicial
set, we must understand the face and degeneracy relations between its
simplices. In low dimensions, we see directly that
$\mathbb C$ has:
   \begin{itemize}
   \item A unique $0$-simplex $UD$, which we write as $\star$;
   \item Two $1$-simplices $UUDD$ and $UDUD$, the first of which is
     $s_0(\star)$ and the second of which is non-degenerate; we write these as $e =
     s_0(\star) \colon \star \to \star$ and $c \colon \star \to
     \star$;
   \item Five $2$-simplices: three degenerate ones $UUUDDD$, $UUDDUD$ and
     $UDUUDD$, and two non-degenerate ones $UUDUDD$ and $UDUDUD$.
     We depict these, and their faces, by:
     \begin{equation}
     \begin{gathered}
       \twosimp{\star}{\star}{\star}{e}{e}{e}{\substack{\,s_0(e)\\=s_1(e)}} \qquad
       \twosimp{\star}{\star}{\star}{e}{c}{c}{s_0(c)} \qquad
       \twosimp{\star}{\star}{\star}{c}{e}{c}{s_1(c)} \\
       \twosimp{\star}{\star}{\star}{c}{c}{c}{t} \qquad
       \twosimp{\star}{\star}{\star}{e}{e}{c}{i} \rlap{\quad .}
     \end{gathered}\label{eq:1}
   \end{equation}
   \end{itemize}

 In higher dimensions, the simplices of $\mathbb C$ will be determined
 by \emph{coskeletality}. A simplicial set is called
 \emph{$r$-coskeletal} when every $n$-boundary with $n > r$ has a
 unique filler; here, an \emph{$n$-boundary} in a simplicial set is a
 collection of $(n-1)$-simplices $(x_0, \dots, x_n)$ satisfying
 $d_j(x_i) = d_i(x_{j+1})$ for all $0 \leqslant i \leqslant j < n$; a
 \emph{filler} for such a boundary is an $n$-simplex $x$ with $d_i(x)
 = x_i$ for $i = 0, \dots, n$.

 \begin{proposition}
   \label{prop:1}
 The Catalan simplicial set is $2$-coskeletal.  
 \end{proposition}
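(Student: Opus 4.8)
The plan is to first re-describe the simplices of $\mathbb C$ in a way that makes coskeletality transparent, and then to run a short gluing argument. Given a Dyck word $W$ of length $2n+2$, label its $n+1$ matched $U$--$D$ pairs (``arcs'') by $0, \dots, n$, so that the face map $d_l$ deletes arc $l$. For $i < j$ the order of the four endpoints $U_i, U_j, D_i, D_j$ is determined except for whether $D_i$ precedes $U_j$: set $x^W_{ij} = 0$ if the arcs cross ($U_i < U_j < D_i < D_j$) and $x^W_{ij} = 1$ if they are disjoint ($U_i < D_i < U_j < D_j$). First I would show that $W \mapsto (x^W_{ij})_{i<j}$ is a bijection from $\mathbb C_n$ onto the set of functions $x$ on $\{(i,j) : 0 \leqslant i < j \leqslant n\}$ satisfying $x_{ik} \geqslant x_{ij} \vee x_{jk}$ for all $i < j < k$, under which $d_l$ corresponds to restriction along $\delta_l$ and $s_l$ to restriction along $\sigma_l$ (a pair collapsed by $\sigma_l$ being assigned $0$). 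This is the presentation of $\mathbb C$ via the Boolean algebra $\mathbf 2$ mentioned in the introduction.

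The forward direction is easy: for $i < j < k$, if arcs $i,j$ are disjoint or arcs $j,k$ are disjoint then arcs $i,k$ are disjoint too, which is just the inequality $x_{ik} \geqslant x_{ij} \vee x_{jk}$. The substance is the inverse: from such an $x$ one must recover the linear order on the $2n+2$ endpoints. The orders $U_0 < \dots < U_n$, $D_0 < \dots < D_n$ and each $U_i < D_i$ are forced, and $x$ prescribes each remaining comparison between some $U_j$ and some $D_i$ with $i < j$; one must check the resulting relation is transitive, hence a genuine linear order defining a (necessarily Dyck) word. A potential failure of transitivity always involves three arcs and, on that triple, contradicts $x_{ik} \geqslant x_{ij} \vee x_{jk}$; I expect this transitivity check to be the main obstacle, though it is elementary, and the remaining verifications that the bijection is simplicial are bookkeeping.

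Granting this reformulation, fix $n > 2$ and an $n$-boundary $(x_0, \dots, x_n)$. Each $x_l$ is now a function on the pairs from $[n] \setminus \{l\}$, and the boundary conditions $d_j(x_i) = d_i(x_{j+1})$ say exactly that any two of the $x_l$ agree wherever both are defined. Since $n > 2$, every pair and, crucially, every triple of elements of $[n]$ is contained in some $[n] \setminus \{l\}$; hence the $x_l$ glue to a single function $x$ on all pairs of $[n]$, and for any triple $\{u,v,w\}$ the required inequality for $x$ is the one already satisfied by whichever $x_l$ contains that triple. Thus $x \in \mathbb C_n$, and $d_l(x) = x_l$ for every $l$ by construction, so $x$ is a filler; it is the unique filler because any filler restricts to each $x_l$ and so agrees with $x$ on every pair. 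Therefore every $n$-boundary with $n > 2$ has a unique filler, i.e.\ $\mathbb C$ is $2$-coskeletal.
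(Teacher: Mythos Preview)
Your proposal is correct and follows essentially the same approach as the paper: both encode an $n$-simplex by data on the $2$-element subsets of $[n]$ subject to a closure condition involving only triples, and then observe that for $n>2$ every triple lies in some face, so boundaries glue uniquely. The only cosmetic difference is that the paper records the \emph{crossing} relation (your $x_{ij}=0$) rather than the disjointness relation; its condition that $i\mathbin R k$ forces $i\mathbin R j$ and $j\mathbin R k$ is precisely the contrapositive of your inequality $x_{ik}\geqslant x_{ij}\vee x_{jk}$.
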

 \begin{proof}
   For each natural number $n$, let $\mathbb K_n$ be the set of binary
   relations $R \subset \{0, \dots, n\}^2$ such that
   \begin{enumerate}[(i)]
   \item $i \mathbin R j$ implies $i < j$;
   \item $i < j < k$ and $i \mathbin R k$ implies $i \mathbin R j$ and
     $j \mathbin R k$.
   \end{enumerate}
   For each $n \geqslant 0$, there is a bijection $\mathbb C_{n} \to
   \mathbb K_{n}$ which sends a Dyck word $W$ to the set of those
   pairs $i <j$ such that the $(j+1)$st $D$ precedes the $(i+1)$st $U$
   in $W$; these bijections induce a simplicial structure on the
   $\mathbb K_n$'s, and it suffices to prove that this induced
   structure is
   $2$-coskeletal.

   We may identify the faces of an $n$-simplex $R \in \mathbb K_n$
   with the restrictions of $R$ to the $(n+1)$ distinct $n$-element
   subsets of $\{0, \dots, n\}$. An arbitrary collection $(R_0, \dots,
   R_n)$ of such relations, seen as elements of $\mathbb K_{n-1}$,
   comprises an $n$-boundary just when each $R_i$ and $R_j$ agree on
   the intersections of their domains. In this situation, there is a a
   unique relation $R \subset \{0, \dots, n\}^2$ restricting back to
   the given $R_i$'s, and satisfying (i) since each $R_i$ does. If
   $n>2$, then each triple $0 \leqslant i < j < k \leqslant n$ will
   lie entirely inside the domain of some $R_\ell$, and so the
   relation $R$ will satisfy (ii) since each $R_\ell$ does, and thus
   constitute an element of $\mathbb K_n$. Thus for $n >
   2$, each $n$-boundary of $\mathbb K \cong \mathbb C$ has a unique filler.
 \end{proof}

 We now give  one further description of the Catalan
 simplicial set, perhaps the most appealing: we will exhibit it as the
 monoidal nerve of a particularly simple monoidal category, namely the
 poset $\mathbf 2 = \bot \leqslant \top$, seen as a monoidal category
 with tensor product given by disjunction.

 We first explain what we mean by this.  Recall that if $\CA$ is a
 category, then its \emph{nerve} $\mathrm N(\CA)$ is the simplicial
 set whose $0$-simplices are objects of $\CA$, and whose $n$-simplices
 for $n > 0$ are strings of $n$ composable morphisms. Since the face
 and degeneracy maps are obtained from identities and composition in
 $\CA$, the nerve in fact encodes the entire category structure of
 $\CA$.

 Suppose now that $\CA$ is a \emph{monoidal category} in the sense
 of~\cite{ML1963}---thus, equipped with a tensor product functor $\otimes
 \colon \CA \times \CA \to \CA$, a unit object $I \in \CA$, and
 families of natural isomorphisms $\alpha_{ABC} \colon (A \otimes B)
 \otimes C \cong A \otimes (B \otimes C)$, $\lambda_A \colon I \otimes
 A \cong A$ and $ \rho_A \colon A \cong A \otimes I$, satisfying
 certain coherence axioms which we recall in detail in
 Section~\ref{sec:higher} below. In this situation, the nerve of $\CA$
 as a category fails to encode any information concerning the monoidal
 structure. However, by viewing $\CA$ as a one-object bicategory
 (=weak $2$-category), we may form a different nerve which \emph{does}
 encode this extra information.
 \begin{definition}\label{def:monnerve}
   Let $\CA$ be a monoidal category. The \emph{monoidal nerve} of $\CA$ is the
   simplicial set $\mathrm N_\otimes(\CA)$ defined as follows:
\begin{itemize}
\item There is a unique $0$-simplex, denoted $\star$.
\item A $1$-simplex is an object $A \in \CA$; its two faces are
  necessarily $\star$.
\item A $2$-simplex is a morphism $A_{12} \otimes A_{01} \to A_{02}$
  in $\CA$; its three faces are $A_{12}$,
  $A_{02}$ and $A_{01}$.
\item A $3$-simplex is a commuting diagram
\def\xab{A_{01}}
\def\xbc{A_{12}}
\def\xcd{A_{23}}
\def\xac{A_{02}}
\def\xad{A_{03}}
\def\xbd{A_{13}}
\def\xbcd{A_{123}}
\def\xacd{A_{023}}
\def\xabd{A_{013}}
\def\xabc{A_{012}}
  \begin{equation}\label{eq:threesimpdiag}
\monnerve
\end{equation} in $\CA$; its four faces are $A_{123}, A_{023},
A_{013}$ and $A_{012}$.
\item Higher-dimensional simplices are determined by $3$-coskeletality.
\end{itemize}
The degeneracy of the unique $0$-simplex is the unit object $I \in  
\CA$; the two degeneracies $s_0(A), s_1(A)$ of a $1$-simplex are the
respective coherence constraints $\rho^{-1}_A \colon A \otimes I \to
A$ and $\lambda_A \colon I \otimes A \to A$; the three degeneracies of
a $2$-simplex are simply the assertions that certain diagrams commute,
which is so by the axioms for a monoidal category. Higher degeneracies
are determined by coskeletality.
\end{definition}

Note that, because the monoidal nerve arises from viewing a monoidal
category as a one-object bicategory, we have a dimension shift:
objects and morphisms of $\CA$ become $1$- and $2$-simplices of the
nerve, rather than $0$- and $1$-simplices. 

\begin{proposition}
  The simplicial set $\mathbb C$ is uniquely isomorphic to the
  monoidal nerve of the poset $\mathbf 2 = \mathord \bot \leqslant \mathord \top$, seen as a monoidal category
  under disjunction.
\end{proposition}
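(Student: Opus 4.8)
The plan is to exploit coskeletality. By Proposition~\ref{prop:1} the Catalan simplicial set $\mathbb C$ is $2$-coskeletal, and I would first establish the same for the monoidal nerve $N := \mathrm N_\otimes(\mathbf 2)$. It is $3$-coskeletal by construction, so it suffices to show that every $3$-boundary in $N$ admits a unique filler. But $\mathbf 2$ is a poset, hence thin as a category, so every diagram in it commutes; consequently a $3$-simplex of $N$ — a commuting square of the shape~\eqref{eq:threesimpdiag} — is completely and uniquely determined by the six edge-objects and four triangle-morphisms it mentions, all of which are in turn forced by its four $2$-dimensional faces. Conversely, given a $3$-boundary $(x_0,x_1,x_2,x_3)$ of $2$-simplices of $N$, the compatibility conditions make the edge-objects well-defined, the four triangle-inequalities needed for the morphisms in~\eqref{eq:threesimpdiag} to exist hold because each $x_i$ is a legitimate $2$-simplex, and the resulting square commutes automatically. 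Thus every $3$-boundary has a unique filler, and $N$ is $2$-coskeletal.

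Granting this, both $\mathbb C$ and $N$ lie in the image of the right adjoint $\mathrm{cosk}_2$ to the truncation functor $\mathrm{tr}_2$ from simplicial sets to $2$-truncated simplicial sets (presheaves on the full subcategory $\Delta_{\leqslant 2} \hookrightarrow \Delta$): indeed $2$-coskeletality says exactly that the unit $X \to \mathrm{cosk}_2\mathrm{tr}_2 X$ is invertible. Since $\mathrm{cosk}_2$ is fully faithful, it restricts to an equivalence between $2$-truncated simplicial sets and $2$-coskeletal ones, under which isomorphisms correspond to isomorphisms. Hence the set of isomorphisms $\mathbb C \to N$ is in bijection with the set of isomorphisms of $2$-truncated simplicial sets $\mathrm{tr}_2\mathbb C \to \mathrm{tr}_2 N$, and it remains to show there is exactly one of the latter.

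I would construct this $2$-truncated isomorphism one dimension at a time, showing at each stage that no choice is available. Each side has a single $0$-simplex, so there the comparison is forced. On $1$-simplices, compatibility with $s_0$ forces $e = s_0(\star) \mapsto s_0(\star) = \bot$ (the tensor unit of $\mathbf 2$ under disjunction being $\bot$), whence the remaining non-degenerate $1$-simplex must go $c \mapsto \top$; so the $1$-dimensional part is determined. For $2$-simplices I would use that a $2$-simplex of $N$ is precisely a triple $(A_{01},A_{12},A_{02}) \in \mathbf 2^3$ with $A_{12}\vee A_{01}\leqslant A_{02}$, whose faces are $(d_0,d_1,d_2) = (A_{12},A_{02},A_{01})$; enumerating these five triples and reading off the triples of faces of the five $2$-simplices of $\mathbb C$ displayed in~\eqref{eq:1}, one finds that on each side the $2$-simplices are pairwise distinguished by their triples of faces and that the two resulting lists of face-triples coincide under $e\leftrightarrow\bot$, $c\leftrightarrow\top$. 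This forces the bijection on $2$-simplices, and one then checks it respects the degeneracy operators $\mathbb C_0 \to \mathbb C_1 \to \mathbb C_2$ — e.g. $s_0(e)=s_1(e)$, $s_0(c)$ and $s_1(c)$ match $s_0(\bot){=}s_1(\bot)$, $s_0(\top)$ and $s_1(\top)$, whose triples of faces are $(e,e,e)$, $(c,c,e)$ and $(e,c,c)$ respectively — which is a finite verification.

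The real content is the coskeletality of both sides and the consequent reduction to $2$-truncated data; the remaining obstacle is purely clerical, namely correctly extracting the triples of faces of the five $2$-simplices in~\eqref{eq:1} and of the five $2$-simplices of $N$ and matching their degeneracies. (One could instead avoid the truncation machinery by identifying $N$ directly with the relational model $\mathbb K$ from the proof of Proposition~\ref{prop:1}, via $A \mapsto \{(i,j) : i<j,\ A_{ij} = \bot\}$, since a $2$-cell $A_{jk}\vee A_{ij} \to A_{ik}$ exists iff $A_{ik} = \bot$ implies $A_{jk} = A_{ij} = \bot$; but verifying compatibility with all faces and degeneracies there is no less work.)
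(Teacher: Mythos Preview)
Your proof is correct and follows essentially the same approach as the paper: both arguments establish that $\mathrm N_\otimes(\mathbf 2)$ is $2$-coskeletal (since diagrams in a poset commute automatically), reduce to matching in dimensions $\leqslant 2$, and then force the bijection dimension by dimension using compatibility with degeneracies and the fact that $2$-simplices on each side are determined by their boundaries. The only difference is packaging: you invoke the $\mathrm{tr}_2 \dashv \mathrm{cosk}_2$ adjunction explicitly to justify the reduction, whereas the paper leaves this implicit, and you are slightly more thorough in verifying the degeneracy compatibility in dimension~$2$.
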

\begin{proof}
  In any monoidal nerve $\mathrm N_\otimes(\CA)$, each $3$-dimensional
  boundary has at most one filler, existing just when the
  diagram~\eqref{eq:threesimpdiag} associated to the boundary
  commutes. Since every diagram in a poset commutes, the nerve
  $\mathrm{N}_\otimes(\mathbf 2)$, like $\mathbb C$, is
  $2$-coskeletal. It remains to show that $\mathbb C \cong \mathrm
  N_\otimes(\mathbf 2)$ in dimensions $0,1,2$. In dimension $0$ this
  is trivial. In dimension $1$, any isomorphism must send $s_0(\star)
  = e \in \mathbb C_1$ to $s_0(\star) = \bot \in \mathrm
  N_\otimes(\mathbf 2)_1$ and hence must send $c$ to $\top$. In
  dimension $2$, the $2$-simplices of $\mathrm N_\otimes(\mathbf 2)$
  are of the form
 \begin{equation*}
   \twosimp{\star}{\star}{\star}{x_{01}}{x_{12}}{x_{02}}{}
 \end{equation*}  
 where $x_{12} \vee x_{01} \leqslant x_{02}$ in $\mathrm{N}_\otimes(\mathbf
 2)$. Thus in $\mathrm N_\otimes(\mathbf 2)$, as in $\mathbb C$, there is at
 most one $2$-simplex with a given boundary, and by examination of
 \eqref{eq:1}, we see that the same possibilities
 arise on both sides; thus there is a unique isomorphism $\mathbb C_2
 \cong \mathrm N_\otimes(\mathbf 2)_2$ compatible with the face maps, as required. 
\end{proof}

We conclude this section by investigating the non-degenerate simplices of the
Catalan simplicial set; these will be of importance in the following
sections, where they will play the role of basic coherence data in
higher-dimensional monoidal structures. We will see that these
non-degenerate simplices form a sequence of Motzkin sets. The
\emph{Motzkin numbers}~\cite{Donaghey1977Motzkin} $1,1,2,4,9,\dots$
are defined by the recurrence relations
\[
M_0 = 1 \qquad \text{and} \qquad M_{n+1} = M_n + \textstyle\sum_{k=0}^{n-1} M_{k}
M_{n-1-k}\rlap{ .}
\]
An $\mathbb N$-indexed family of sets is a \emph{sequence of Motzkin
  sets} if there are a Motzkin number of elements in each dimension.

\begin{example}
  A \emph{Motzkin word} is a string in the alphabet $\{U,C,D\}$ which,
  on striking out every $C$, becomes a Dyck word. The sets $\mathbb M_n$
  of Motzkin words of length $n$ are a sequence of Motzkin sets.\label{ex:2}
\end{example}

\begin{proposition}\label{prop:nd}
  The family $(\mathrm{nd}\,{\mathbb C}_n : n \in \mathbb N)$ of
  non-degenerate simplices of $\mathbb C$ is a sequence of Motzkin
  sets.
\end{proposition}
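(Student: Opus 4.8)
The plan is to exhibit an explicit bijection between the set $\mathrm{nd}\,\mathbb C_n$ of non-degenerate $n$-simplices and the set $\mathbb M_{n}$ of Motzkin words of length $n$ from Example~\ref{ex:2}; since the latter is a sequence of Motzkin sets, this will suffice. Recall from Definition~\ref{def:catalan} that an $n$-simplex is a Dyck word of length $2n+2$, and a degeneracy $s_i$ repeats the $i$th $U$ and the $i$th $D$. Thus a Dyck word $W$ is degenerate precisely when, for some $1 \leqslant i \leqslant n+1$, the $i$th $U$ is immediately followed by the $(i+1)$st $U$ \emph{and} the $i$th $D$ is immediately preceded by the $(i-1)$st $D$—equivalently, the substring ``$UU$'' corresponding to these $U$'s and the substring ``$DD$'' corresponding to these $D$'s are ``matched'' in the sense that deleting one $U$ and one $D$ inverts the repetition. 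I would first make this non-degeneracy condition precise in terms of the relational model $\mathbb K_n$ from the proof of Proposition~\ref{prop:1}: under the bijection $\mathbb C_n \cong \mathbb K_n$ sending $W$ to $R_W = \{(i,j) : \text{the }(j{+}1)\text{st }D\text{ precedes the }(i{+}1)\text{st }U\}$, one checks that $s_i$ corresponds to ``duplicating index $i$'', so that $R \in \mathbb K_n$ is non-degenerate exactly when no two consecutive indices $k, k+1$ are ``$R$-indistinguishable'', i.e.\ for every $0 \leqslant k < n$ there is some $\ell$ with either ($\ell \mathbin R k \not\Leftrightarrow \ell \mathbin R (k{+}1)$) or ($k \mathbin R \ell \not\Leftrightarrow (k{+}1) \mathbin R \ell$), together with the base condition that $k \mathbin R (k{+}1)$ fails for no—actually holds or fails freely. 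Working out exactly which separating witness can occur, using conditions (i) and (ii) on $\mathbb K_n$, should show that consecutive indices $k,k+1$ are indistinguishable iff $k \mathbin R (k{+}1)$ does \emph{not} hold and $R$ contains neither a pair $(k{-}?, \cdot)$ distinguishing them; I expect this to collapse to the clean statement that $W$ is non-degenerate iff its reading never performs a ``cancellable repeat''.

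Having pinned down non-degeneracy combinatorially, the second step is to define the map to Motzkin words. Given a non-degenerate Dyck word $W$ of length $2n+2$, scan it and record, for each of the $n+1$ matched $UD$-pairs (equivalently each index $0,\dots,n$), a letter: output $U$ if that pair ``opens'' something still unclosed, $D$ if it closes a previously opened one, and $C$ if it is a ``flat'' pair $UD$ occurring with nothing nested inside it. More precisely—mirroring the $\mathbb M$ description—I would read $W$ left to right, and each time the $i$th $U$ appears, look at whether the $i$th $D$ comes immediately after (a height-$0$ step, giving $C$) or later (giving $U$, and emit the matching $D$ when the $i$th $D$ is read). This produces a word in $\{U,C,D\}$ of length $n+1$; I then drop one redundant letter (or reindex by the dimension shift) to land in length $n$. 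The key computation is that striking out the $C$'s yields a genuine Dyck word—immediate from the way the $U$'s and $D$'s were emitted in matched order—and that the map is injective, because $W$ can be reconstructed by re-inserting the flat pairs at the positions indicated by the $C$'s and the nested pairs by the $U/D$ heights.

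The main obstacle I anticipate is getting the \emph{dimension bookkeeping} exactly right: an $n$-simplex is a Dyck word of length $2n+2$ (so $n+1$ matched pairs), whereas a Motzkin word indexing dimension $n$ should have length $n$, so the naive ``one letter per matched pair'' recipe overshoots by one. Resolving this cleanly is where care is needed—probably the first matched pair (the outermost $U\cdots D$, or whichever pair corresponds to the forced degeneracy direction) must always be treated specially and omitted, and one has to verify that after this omission non-degeneracy of $W$ corresponds precisely to the Motzkin word being arbitrary (no constraint), so that the bijection hits \emph{all} of $\mathbb M_n$. A sanity check against the low-dimensional data in~\eqref{eq:1} is essential here: $\mathrm{nd}\,\mathbb C_0 = \{UD\}$ should match the unique Motzkin word of length $0$; $\mathrm{nd}\,\mathbb C_1 = \{c\} = \{UDUD\}$ the unique one of length $1$; and $\mathrm{nd}\,\mathbb C_2 = \{t, i\} = \{UUDUDD,\ UDUDUD\}$ the two Motzkin words $CC$ and... of length $2$, namely $UD$ and $CC$—confirming $M_2 = 2$. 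Once the indexing is fixed so these cases work, the general argument is a routine induction on the nesting structure of the Dyck word, and the recurrence $M_{n+1} = M_n + \sum_{k} M_k M_{n-1-k}$ drops out of splitting a Motzkin word according to whether its first letter is $C$ (leaving a length-$n$ word) or $U$ (which must be matched by some $D$, splitting the remainder into an inner and an outer Motzkin word).
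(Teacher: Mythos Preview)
Your overall strategy---an explicit bijection $\mathrm{nd}\,\mathbb C_n \cong \mathbb M_n$---is exactly the paper's, but the map you propose is not well-defined, and this is the source of your off-by-one trouble. You attempt to assign a letter to each of the $n+1$ \emph{matched} $UD$-pairs, but in a Dyck word every matched pair both opens and closes, so the trichotomy ``opens something still unclosed / closes a previously opened one / flat'' is not a meaningful distinction at that level. Your alternative description (``look at whether the $i$th $D$ comes immediately after the $i$th $U$'') conflates the bracket-matched partner of the $i$th $U$ with the $i$th $D$ in linear order; these differ (in $UUDUDD$ the first $U$ is bracket-matched to the last $D$), and neither reading produces a length-$n$ word. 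The detour through $\mathbb K_n$ to characterise non-degeneracy is also left incomplete and is in any case unnecessary: directly from the definition of $s_i$ (repeat the $i$th $U$ and the $i$th $D$), a Dyck word $W$ is degenerate if and only if for some $i$ the $i$th and $(i{+}1)$st $U$'s are adjacent \emph{and} the $i$th and $(i{+}1)$st $D$'s are adjacent.

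The paper's fix, which dissolves the off-by-one, is to index not over the $n+1$ letters $U$ but over the $n$ \emph{gaps} between consecutive $U$'s (equivalently, consecutive $D$'s): for $1 \leqslant i \leqslant n$, set $M_i = U$ if the $i$th and $(i{+}1)$st $U$'s are adjacent, $M_i = D$ if the $i$th and $(i{+}1)$st $D$'s are adjacent, and $M_i = C$ otherwise. Non-degeneracy says precisely that the first two cases are mutually exclusive, so the map is well-defined and lands in $\mathbb M_n$ with no letter to discard. The inverse reconstructs the run-lengths of $U$'s and $D$'s from the positions of $U$, $D$, $C$ in the Motzkin word. Under this map your sanity check becomes $t = UDUDUD \mapsto CC$ and $i = UUDUDD \mapsto UD$, as desired.
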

\begin{proof}
  It suffices to construct a bijection $\mathrm{nd}\,{\mathbb C}_n
  \cong \mathbb M_n$ for each $n$. In one direction, we have a map
  $\mathrm{nd}\,{\mathbb C}_n \to \mathbb M_n$ sending a
  non-degenerate Dyck word $W$ to the Motzkin word $M_1 \dots M_n$
  defined as follows: if the $i$th and $(i+1)$st $U$'s are adjacent in
  $W$, then $M_i = U$; if the $i$th and $(i+1)$st $D$'s are adjacent
  in $W$, then $M_i = D$; otherwise $M_i = C$. (Note that the first
  two cases are disjoint; a Dyck word $W$ satisfying both would have
  to be in the image of the $i$th degeneracy map).

  In the other direction, suppose given a Motzkin word $M = M_1 \dots
  M_n$. Let $a_1 < \dots < a_k$ enumerate all $i$ for which $M_i$ is
  $D$ or $C$, and let $b_1 < \dots < b_k$ enumerate all $i$ for which
  $M_i$ is $U$ or $C$. The inverse mapping $\mathbb M_n \to \mathrm{nd}\,{\mathbb C}_n
  $ now sends $M$ to the Dyck word
\[U^{a_1}D^{b_1}U^{a_2-a_1}D^{b_2-b_1} \cdots U^{a_k - a_{k-1}}D^{b_k - b_{k-1}}
U^{n+1-a_k} D^{n+1-b_k}\rlap{ .}\qedhere
\]
\end{proof}
Using this result, we may re-derive a well-known combinatorial
identity relating the Catalan and Motzkin numbers.
\begin{corollary}
  For each $n \geqslant 0$, we have $C_{n+1} = \sum_k {n \choose k} M_k$.
\end{corollary}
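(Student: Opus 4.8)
The plan is to count the $n$-simplices of $\mathbb C$ in two different ways. On the one hand, as noted immediately after Definition~\ref{def:catalan}, we have $\abs{\mathbb C_n} = C_{n+1}$. On the other hand, I would partition the set $\mathbb C_n$ according to the non-degenerate simplex from which each of its elements is obtained by applying degeneracy operators. The tool that makes this precise is the Eilenberg--Zilber lemma: in any simplicial set $X$, every $n$-simplex $x$ can be written uniquely as $X(\sigma)(y)$ for some surjection $\sigma \colon [n] \to [k]$ in $\Delta$ together with some non-degenerate $k$-simplex $y \in X_k$. Applying this with $X = \mathbb C$ and grouping by the value of $k$, we obtain
\[
C_{n+1} \;=\; \abs{\mathbb C_n} \;=\; \sum_k \bigl(\text{number of order-preserving surjections }[n] \twoheadrightarrow [k]\bigr)\cdot \abs{\mathrm{nd}\,{\mathbb C}_k}\,,
\]
and by Proposition~\ref{prop:nd} we may substitute $\abs{\mathrm{nd}\,{\mathbb C}_k} = M_k$ for each $k$.

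It then remains only to identify the number of order-preserving surjections $[n] \twoheadrightarrow [k]$ with $\binom n k$. Such a surjection $\sigma$ is determined by, and conversely determines, the tuple of fibre cardinalities $(\abs{\sigma^{-1}(0)}, \dots, \abs{\sigma^{-1}(k)})$, which is an arbitrary sequence of $k+1$ strictly positive integers summing to $n+1$; the number of such sequences (compositions of $n+1$ into $k+1$ parts) is $\binom{(n+1)-1}{(k+1)-1} = \binom n k$. Feeding this back into the display yields $C_{n+1} = \sum_k \binom n k M_k$, as claimed.

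I do not expect any genuine obstacle here; the only points that need care are invoking the \emph{uniqueness} half of the Eilenberg--Zilber decomposition correctly and the elementary enumeration of order-preserving surjections. Should one wish for a self-contained argument avoiding the Eilenberg--Zilber lemma, one could instead combine the explicit description of the degeneracy maps in Definition~\ref{def:catalan} with the bijection $\mathrm{nd}\,{\mathbb C}_k \cong \mathbb M_k$ of Proposition~\ref{prop:nd} to verify directly that each Dyck word of length $2n+2$ arises from a unique non-degenerate Dyck word of length $2k+2$ by a uniquely determined order-preserving surjection's worth of letter-doublings; but the abstract route above is shorter and cleaner.
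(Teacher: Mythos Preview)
Your proposal is correct and follows essentially the same argument as the paper's proof: both invoke the Eilenberg--Zilber lemma to decompose $\mathbb C_n$ into pairs (surjection $[n]\twoheadrightarrow[k]$, non-degenerate $k$-simplex), count the surjections as $\binom{n}{k}$, and substitute $\abs{\mathrm{nd}\,\mathbb C_k} = M_k$ from Proposition~\ref{prop:nd}. Your additional justification of the surjection count via compositions is a harmless elaboration of a step the paper leaves implicit.
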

\begin{proof}
  Recall that the \emph{Eilenberg--Zilber lemma}~\cite[\S II.3]{GZ} states that
  every simplex $x \in X_n$ of a simplicial set $X$ is the image under
  a unique surjection $\phi \colon [n] \twoheadrightarrow [k]$ in
  $\Delta$ of a unique non-degenerate simplex $y \in X_k$. Since there
  are $n \choose k$ order-preserving surjections $[n]
  \twoheadrightarrow [k]$, 
  \[C_{n+1} = \abs{\mathbb C_n} = \textstyle\sum_{\phi \colon [n]
    \twoheadrightarrow [k]} \abs{\mathrm{nd}\,{\mathbb C}_k} = \sum_k
  {n \choose k} \abs{\mathrm{nd}\,{\mathbb C}_k} =  \sum_k {n \choose k} M_k \] as required.
\end{proof}
\renewcommand{\aa}{a} \renewcommand{\ll}{\ell} \newcommand{\rr}{r}
\newcommand{\kk}{k}



\section{First classifying properties}
\label{sec:class-prop-catal}
We now begin to investigate the \emph{classifying properties} of the
Catalan simplicial set, by looking at the structure picked out by maps
from $\mathbb C$ into the nerves of certain kinds of categorical
structure.

For our first classifying property, recall that a \emph{monoid} in a
monoidal category $\CA$ is given by an object $A \in \CA$ and
morphisms $\mu \colon A \otimes A \to A$ and $\eta \colon I \to A$
rendering commutative the three diagrams
    \begin{equation*}
      \def\xab{A}
      \def\xbc{A}
      \def\xcd{A}
      \def\xac{A}
      \def\xad{A}
      \def\xbd{A}
      \def\xbcd{\mu}
      \def\xacd{\mu}
      \def\xabd{\mu}
      \def\xabc{\mu}
     \monnerve 
\quad
\cd[@C-0.5em]{
A \ar[r]^-{\rho_A} \ar@{=}[d] & A \otimes I \ar[d]^{1 \otimes \eta} \\
A & A \otimes A \ar[l]^-{\mu}
} \quad  \cd{
I \otimes A \ar[r]^{\lambda_A} \ar[d]_{\eta \otimes 1} & A \\
A \otimes A \ar[ur]_{\mu}
} 
\end{equation*}

\begin{proposition}\label{prop:classifymonoids}
  If $\CA$ is a monoidal category, then to give a simplicial map $f
  \colon \mathbb C \to \mathrm N_\otimes(\CA)$ is equally to give a
  monoid in $\CA$.
\end{proposition}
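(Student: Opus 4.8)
The plan is to unwind the definition of a simplicial map $f \colon \mathbb C \to \mathrm N_\otimes(\CA)$ dimension by dimension, using the fact (already established) that $\mathbb C$ is $2$-coskeletal while $\mathrm N_\otimes(\CA)$ is $3$-coskeletal. A simplicial map is determined by its action in low dimensions; because of the dimension shift in the monoidal nerve, the relevant dimensions are $0$, $1$, $2$, and $3$. **First I would** observe that in dimension $0$ there is no choice: $f$ must send $\star$ to $\star$. In dimension $1$, $f$ sends the degenerate simplex $e = s_0(\star)$ to $s_0(\star) = I$, and sends the non-degenerate $1$-simplex $c$ to some object $A \in \CA$; this $A$ will be the underlying object of the monoid.

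**Next I would** examine dimension $2$. The five $2$-simplices of $\mathbb C$ are displayed in \eqref{eq:1}; three are degenerate and so their images are forced (they are instances of the monoidal coherence isomorphisms $\rho$, $\lambda$ applied appropriately, together with a commuting-diagram assertion). The two non-degenerate $2$-simplices are $t$, whose boundary is $(c,c,c)$, and $i$, whose boundary is $(e,e,c)$. The map $f$ must send $t$ to a $2$-simplex of $\mathrm N_\otimes(\CA)$ with boundary $(A,A,A)$, that is, a morphism $\mu \colon A \otimes A \to A$; and it must send $i$ to a $2$-simplex with boundary $(fe, fe, fc) = (I, I, A)$, that is, a morphism $I \otimes I \to A$. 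Precomposing with $\lambda_I^{-1} = \rho_I^{-1} \colon I \to I \otimes I$ (or rather, reading off the correct normalization), this is the same data as a morphism $\eta \colon I \to A$. So in dimension $2$, a simplicial map supplies exactly an object $A$ together with $\mu$ and $\eta$; conversely any such choice extends uniquely to dimension $2$ since $\mathbb C$ and $\mathrm N_\otimes(\CA)$ each have at most one $2$-simplex with a given boundary among the relevant cases, but one must still check the degenerate simplices are sent correctly — this is automatic.

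**The main obstacle** will be dimension $3$, where the three commuting-diagram axioms for a monoid must be extracted. Since $\mathbb C$ is $2$-coskeletal, every $3$-boundary in $\mathbb C$ has a unique filler; hence $f$ on $3$-simplices is determined, but for $f$ to be well-defined as a simplicial map, every $3$-simplex of $\mathbb C$ must be sent to an \emph{actual} $3$-simplex of $\mathrm N_\otimes(\CA)$, i.e.\ to a \emph{commuting} square of the form \eqref{eq:threesimpdiag}. The $3$-simplices of $\mathbb C$ correspond (via the bijection $\mathbb C \cong \mathbb K$ of Proposition~\ref{prop:1}) to the relations in $\mathbb K_3$; I would enumerate those whose images under $f$ give non-trivial commuting conditions. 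The relation with all $2$-faces equal to $c$ forces the associativity pentagon/square for $\mu$; the relations involving $e$'s and the simplex $i$ force the two unit axioms. One must check that these three conditions are not only necessary but sufficient: any other $3$-simplex of $\mathbb C$ involves enough degeneracies that the corresponding square in $\CA$ commutes automatically by naturality of $\alpha, \lambda, \rho$ and the monoidal coherence axioms. Thus I expect the verification to reduce to: (a) listing the at most five or six ``interesting'' $3$-simplices of $\mathbb C$; (b) writing out the square \eqref{eq:threesimpdiag} each one maps to; (c) matching these against the three displayed monoid axioms. Beyond dimension $3$, $3$-coskeletality of $\mathrm N_\otimes(\CA)$ together with $2$-coskeletality of $\mathbb C$ guarantees that $f$ extends uniquely and compatibly, so no further conditions arise. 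The correspondence is then visibly a bijection, functorial in the obvious sense, completing the proof.
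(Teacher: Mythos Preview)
Your proposal is correct and follows essentially the same route as the paper: reduce by coskeletality to dimensions $\leqslant 3$, read off $A$, $\mu$, $\eta$ from the non-degenerate $1$- and $2$-simplices, and then check which $3$-simplices impose conditions. The paper is simply more concrete in dimension~$3$: it lists the four non-degenerate $3$-simplices $a=(t,t,t,t)$, $\ell=(i,s_1 c,t,s_1 c)$, $r=(s_0 c,t,s_0 c,i)$, $k=(i,s_1 c,s_0 c,i)$ explicitly, writes out each square~\eqref{eq:threesimpdiag}, and observes that $a$, $\ell$, $r$ give exactly the three monoid axioms while the square for $k$ commutes automatically by monoidal coherence. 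Your phrase ``any other $3$-simplex involves enough degeneracies'' slightly undersells this last point---$k$ is itself non-degenerate, though two of its faces are---so when you carry out the enumeration be sure not to overlook it.
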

\begin{proof}
  Since $\mathrm N_\otimes(\CA)$ is $3$-coskeletal, a simplicial map $f \colon
  \mathbb C \to \mathrm N_\otimes(\CA)$ is uniquely determined by where it
  sends non-degenerate simplices of dimension $\leqslant 3$. We have
  already described the non-degenerate simplices in dimensions
  $\leqslant 2$, while in dimension $3$, there are four such, given
  by
\begin{align*}
\aa &= (\tee,\tee,\tee,\tee) & 
\ll &= (\eye,s_1(c),\tee,s_1(c)) \\
\rr &= (s_0(c),\tee,s_0(c),\eye) &
\kk &= (\eye, s_1(c), s_0(c), \eye) \rlap{ .}
\end{align*}
Here, we take advantage of $2$-coskeletality of $\mathbb C$ to
identify a $3$-simplex $x$ with its tuple $(d_0(x), d_1(x), d_2(x),
d_3(x))$ of $2$-dimensional faces.  We thus see that to
give $f \colon \mathbb C \to \mathrm N_\otimes(\CA)$ is to give:
\begin{itemize}
  \item In dimension $0$, no data: $f$ must send $\star$ to $\star$;
  \item In dimension $1$, an object $A \in \CA$, the image of the
    non-degenerate simplex $c \in \mathbb C_1$;
  \item In dimension $2$, morphisms $\mu \colon A \otimes A \to A$ and
    $\eta' \colon I \otimes I \to A$, the images of the non-degenerate
    simplices $t, i \in \mathbb C_2$;
  \item In dimension $3$, commutative diagrams 
    \begin{equation*}
      \def\xab{A}
      \def\xbc{A}
      \def\xcd{A}
      \def\xac{A}
      \def\xad{A}
      \def\xbd{A}
      \def\xbcd{\mu}
      \def\xacd{\mu}
      \def\xabd{\mu}
      \def\xabc{\mu}
      f(a) = \monnerve
    \end{equation*}
   \begin{equation*}
      \def\xab{A}
      \def\xbc{I}
      \def\xcd{I}
      \def\xac{A}
      \def\xad{A}
      \def\xbd{A}
      \def\xbcd{\eta'}
      \def\xacd{\lambda_A}
      \def\xabd{\mu}
      \def\xabc{\lambda_A}
      f(\ell) = \monnerve
    \end{equation*}    \begin{equation*}
      \def\xab{I}
      \def\xbc{I}
      \def\xcd{A}
      \def\xac{A}
      \def\xad{A}
      \def\xbd{A}
      \def\xbcd{\rho^{-1}_A}
      \def\xacd{\mu}
      \def\xabd{\rho^{-1}_A}
      \def\xabc{\eta'}
      f(r) = \monnerve
    \end{equation*}
    \begin{equation*}
      \def\xab{I}
      \def\xbc{I}
      \def\xcd{I}
      \def\xac{A}
      \def\xad{A}
      \def\xbd{A}
      \def\xbcd{\eta'}
      \def\xacd{\lambda_A}
      \def\xabd{\rho^{-1}_A}
      \def\xabc{\eta'}
      f(k) = \monnerve
    \end{equation*}
    the images as displayed of the non-degenerate $3$-simplices
    of $\mathbb C$.
  \end{itemize}
  On defining $\eta = \eta' \circ \rho_A \colon I \to I \otimes I \to
  A$, we obtain a bijective correspondence between the data
  $(A,\mu,\eta')$ for a simplicial map $\mathbb C \to \mathrm N_\otimes(\CA)$
  and the data $(A,\mu,\eta)$ for a monoid in $\CA$. Under this
  correspondence, the axiom $f(a)$ for $(A, \mu, \eta')$ is clearly
  the same as the first monoid axiom for $(A,\mu,\eta)$; a short
  calculation with the axioms for a monoidal category shows that
  $f(\ell)$ and $f(r)$ correspond likewise with the second and third
  monoid axioms. This leaves only $f(k)$; but it is easy to show that
  this is automatically satisfied in any monoidal category. Thus
  monoids in $\CA$ correspond bijectively with simplicial maps
  $\mathbb C \to \mathrm N_\otimes(\CA)$ as claimed.
 \end{proof}

 \begin{remark}
   A generalisation of this classifying property concerns maps
   from $\mathbb C$ into the nerve of a \emph{bicategory} $\CB$ in the
   sense of~\cite{Ben1967}. Bicategories are ``many object'' versions
   of monoidal categories, and the nerve of a bicategory is a ``many
   object'' version of the monoidal nerve of
   Definition~\ref{def:monnerve}. An easy modification of the
   preceding argument shows that simplicial maps $\mathbb C \to
   \mathrm N(\CB)$ classify monads in the bicategory $\CB$.
 \end{remark}

\section{Higher classifying properties}\label{sec:higher}
The category $\mathrm{Cat}$ of small categories and functors bears a
monoidal structure given by cartesian product, and monoids with respect
to this are precisely small \emph{strict} monoidal categories---those
for which the associativity and unit constraints $\alpha$, $\lambda$
and $\rho$ are all identities.  It follows by
Proposition~\ref{prop:classifymonoids} that simplicial maps $\mathbb C
\to \mathrm N_\otimes(\mathrm{Cat})$ classify small strict monoidal
categories.
The purpose of this section is to show that, in fact, we may also classify both
\begin{enumerate}[(i)]
\item Not-necessarily-strict monoidal categories; and
\item \emph{Skew-monoidal} categories in the sense of~\cite{Szl2012}
\end{enumerate}
by simplicial maps from $\mathbb C$ into suitably modified nerves of
$\mathrm{Cat}$, where the modifications at issue involve changing the
simplices from dimension $3$ upwards. The $3$-simplices will no longer
be commutative diagrams as in~\eqref{eq:threesimpdiag}, but rather
diagrams commuting up to an natural transformation, invertible in the
case of (i) but not necessarily so for (ii). The $4$-simplices will
be, in both cases, suitably commuting diagrams of natural
transformations, while higher simplices will be determined by coskeletality
as before.
Note that, to obtain these new classification results, we do not need
to change $\mathbb C$ itself, only what we map it into. The
change is from something $3$-coskeletal to something $4$-coskeletal,
which means that the non-degenerate $4$-simplices of $\mathbb C$ come
into play. As we will see, these encode precisely the coherence
axioms for monoidal or skew-monoidal structure.

Before continuing, let us make precise the definition of skew-monoidal
category. As explained in the introduction, the notion was introduced
by Szlach\'anyi in~\cite{Szl2012} to describe structures arising in
quantum algebra, and generalises Mac~Lane's notion of monoidal
category by dropping the requirement that the coherence constraints be
invertible.
\begin{definition}
A \emph{skew-monoidal category} is a  
 category $\CA$ equipped with a unit element $I \in \CA$, a tensor
product $\otimes \colon \CA \times \CA \to \CA$, and natural families
of (not neccesarily invertible) constraint maps 
\begin{equation}
\begin{gathered}
  \alpha_{ABC} \colon (A \otimes B) \otimes C \to A \otimes (B \otimes
  C)\\
  \lambda_{A} \colon I \otimes A \to A \quad \text{and} \quad \rho_A \colon A \to A
  \otimes I 
\end{gathered}\label{eq:data}
\end{equation}
subject to the commutativity of the
following diagrams---wherein tensor is denoted by juxtaposition---for
all $A,B,C,D \in \CA$:

\begin{equation*}
\def\xa{((AB)C)D}
\def\xc{(AB)(CD)}
\def\xe{A(B(CD))}
\def\xb{\llap{$(A(B$}C))D}
\def\xd{A((B\rlap{$C)D)$}}
\def\xo{\textstyle (5.1)}
\def\xac{\alpha}
\def\xce{\alpha}
\def\xab{\alpha 1}
\def\xbd{\alpha}
\def\xde{1 \alpha}
\pent[0.1cm]
 \quad
 \cd[@!C@C-6pt]{
  (AI)B \ar[r]^{\alpha} & A(IB) \ar[d]^{1\lambda}
  \ar@{}[dl]|{\textstyle (5.2)}& \\
 AB \ar[u]^{\rho1} \ar[r]_{\mathrm{id}} & AB
 }
\end{equation*}
\begin{equation*}
\cd[@!C@C-22pt]{
& I(AB) \ar[dr]^{\lambda} \ar@{}[d]|(0.6){\textstyle (5.3)} & \\
(IA)B \ar[ur]^{\alpha} \ar[rr]_{\lambda1} &  & AB
}
\quad
\cd[@!C@C-20pt]{
& (AB)I \ar[dr]^{\alpha} \ar@{}[d]|(0.6){\textstyle (5.4)} & \\
AB \ar[ur]^{\rho} \ar[rr]_{1\rho} &  & A(BI)
}
\quad
\cd[@!C@C-3pt@R+4pt]{
& II \ar[dr]^{\lambda} \ar@{}[d]|(0.6){\textstyle (5.5)} & \\
I \ar[ur]^{\rho} \ar[rr]_{\mathrm{id}} && I\rlap{ .}
}
\end{equation*}
\end{definition}
A skew-monoidal category in which $\alpha$, $\lambda$ and $\rho$ are
invertible is exactly a monoidal category; the axioms (5.1)--(5.5) are
then Mac~Lane's original five axioms~\cite{ML1963}, justified by the
fact that they imply the commutativity of \emph{all} diagrams of
constraint maps. In the skew case, this justification no longer
applies, as the axioms no longer force every diagram of constraint
maps to commute; for example, we need not have $1_{I \otimes I} =
\rho_I \circ \lambda_I \colon I \otimes I \to I \otimes I$. The
classification of skew-monoidal structure by maps out of the Catalan
simplicial set can thus be seen as an alternative justification of the
axioms in the absence of such a result.

Before giving our classification result, we describe the modified
nerves of $\mathrm{Cat}$ which will be involved. The possibility of
taking natural transformations as $2$-cells makes $\mathrm{Cat}$ not
just a monoidal category, but a \emph{monoidal bicategory} in the
sense of~\cite{GPS}. Just as one can form a nerve of a monoidal
category by viewing it as a one-object bicategory, so one can form a
nerve of a monoidal bicategory by viewing it as a one-object
tricategory (=weak 3-category), and in fact, various nerve
constructions are possible---see~\cite{GRoT}. The following
definitions are specialisations of some of these nerves to the
case of $\mathrm{Cat}$.

\begin{definition}
  The \emph{lax nerve} $\mathrm{N}_\ell(\mathrm{Cat})$ of the monoidal bicategory
  $\mathrm{Cat}$ is the simplicial set defined as follows:
\begin{itemize}
\item There is a unique $0$-simplex, denoted $\star$.
\item A $1$-simplex is a (small) category $\CA_{01}$; its two faces are
  both $\star$.
\item A $2$-simplex is a functor $A_{012} \colon \CA_{12} \times \CA_{01} \to \CA_{02}$.
\item A $3$-simplex is a natural transformation
  \begin{equation*}
\def\xab{\CA_{01}}
\def\xbc{\CA_{12}}
\def\xcd{\CA_{23}}
\def\xac{\CA_{02}}
\def\xad{\CA_{03}}
\def\xbd{\CA_{13}}
\def\xbcd{A_{123}}
\def\xacd{A_{023}}
\def\xabd{A_{013}}
\def\xabc{A_{012}}
\def\xabcd{A_{0123}}
\laxnerve
\end{equation*} with $1$-cell components
\[
(A_{0123})_{a_{23}, a_{12}, a_{01}} \colon A_{013}(A_{123}(a_{23},a_{12}),a_{01}) \to
A_{023}(a_{23},A_{012}(a_{12},a_{01}))\rlap{ .}
\]
\item A $4$-simplex is a quintuple of appropriately-formed natural transformations
$(A_{1234}, A_{0234}, A_{0134}, A_{0124}, A_{0123})$ making the
pentagon
\begin{equation*}
  \cd[@C-9.6em@R-1.5em]{& \ \ \ \ \ \ \ \ \ \ \ \ \ \ \ \ \ \ &
    A_{024}(A_{234}(a_{34}, a_{23}), A_{012}(a_{12}, a_{01}))
    \ar[dddr]^{A_{0234}}\\ \\
    A_{014}(A_{124}(A_{234}(a_{34}, a_{23}), a_{12}), a_{01})
    \ar[uurr]^{A_{0124}} \ar[dd]_{A_{014}(A_{1234}, 1)}\\
    & & & 
    A_{034}(a_{34}, A_{023}(a_{23}, A_{012}(a_{12}, a_{01}))) 
    \\ 
    A_{014}(A_{134}(a_{34}, A_{123}(a_{23}, a_{12})), a_{01}) \ar[ddrr]_{A_{0134}}
    \\ \\
    &&A_{034}(a_{34}, A_{013}(A_{123}(a_{23}, a_{12}), a_{01}))
    \ar[uuur]_{A_{034}(1, A_{0123})}
  }
\end{equation*}
commute in $\CA_{04}$ for all $(a_{01}, a_{12}, a_{23}, a_{34}) \in
\CA_{01} \times \CA_{12} \times \CA_{23} \times \CA_{34}$.
\item Higher-dimensional simplices are determined by $4$-coskeletality, and
face and degeneracy maps are defined as before.
\end{itemize}
The \emph{pseudo nerve} $\mathrm{N}_p(\mathrm{Cat})$
is defined identically except that the natural transformations
occurring in dimensions $3$ and $4$ are required to be invertible.
\end{definition}
We are now ready to give our higher classifying property of the
Catalan simplicial set.
\begin{proposition}
  To give a simplicial map $f
  \colon \mathbb C \to \mathrm N_p(\mathrm{Cat})$ is equally to give a
  small monoidal category; to give a simplicial map $f
  \colon \mathbb C \to \mathrm N_\ell(\mathrm{Cat})$ is equally to
  give a small skew-monoidal category. 
\end{proposition}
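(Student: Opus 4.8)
The plan is to mimic the proof of Proposition~\ref{prop:classifymonoids} exactly, exploiting the fact that both modified nerves are $4$-coskeletal, so that a simplicial map $f \colon \mathbb C \to \mathrm N_p(\mathrm{Cat})$ (resp. $\mathrm N_\ell(\mathrm{Cat})$) is determined by its values on non-degenerate simplices of dimension $\leqslant 4$. First I would unwind what such data amount to, dimension by dimension: dimension $0$ gives nothing; the non-degenerate $1$-simplex $c$ gives a category $\CA$; the non-degenerate $2$-simplices $t$ and $i$ give functors $\mu \colon \CA \times \CA \to \CA$ and $\eta' \colon \mathbf 1 \times \mathbf 1 \to \CA$, i.e.\ an object of $\CA$. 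Next, in dimension $3$, the four non-degenerate $3$-simplices $a, \ell, r, k$ of $\mathbb C$ (listed as $(\tee,\tee,\tee,\tee)$, $(\eye,s_1(c),\tee,s_1(c))$, $(s_0(c),\tee,s_0(c),\eye)$ and $(\eye,s_1(c),s_0(c),\eye)$ in the proof of Proposition~\ref{prop:classifymonoids}) each map to a natural transformation between the appropriate composite functors; as in the monoid case, the degeneracies force the boundary data, so $f(a)$ is a natural transformation with components $(A \otimes A) \otimes A \to A \otimes (A \otimes A)$, $f(\ell)$ has components $I \otimes A \to A$, $f(r)$ has components $A \to A \otimes I$, and $f(k)$ has components $I \otimes I \to I \otimes I$. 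Setting $\alpha := f(a)$, and defining $\lambda$ and $\rho$ from $f(\ell)$ and $f(r)$ by composing with the relevant degeneracy constraints of the nerve exactly as $\eta = \eta' \circ \rho_A$ was formed in the monoid proof, we recover precisely the constraint data~\eqref{eq:data} of a (skew-)monoidal category; in the pseudo case these are additionally invertible, matching Mac~Lane's definition, while in the lax case they need not be. The remaining $3$-simplex $f(k)$ should, as in the monoid argument, turn out to be redundant: I expect it to be forced by naturality of $\rho$ and $\lambda$ together with axiom (5.2) or (5.5), and this needs to be checked in both the pseudo and lax settings.

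The heart of the argument is dimension $4$: here one must match the non-degenerate $4$-simplices of $\mathbb C$ against the commuting-pentagon condition imposed on $4$-simplices of the nerve, and show the resulting equations are exactly the five axioms (5.1)--(5.5). So I would first enumerate the non-degenerate $4$-simplices of $\mathbb C$ --- by Proposition~\ref{prop:nd} there are $M_4 = 9$ of them --- and identify, for each, its tuple of five $3$-dimensional faces (using $3$-coskeletality of $\mathbb C$ to represent a $4$-simplex by its faces, as was done one dimension down). Exactly five of these nine non-degenerate $4$-simplices will have all five faces among the simplices carrying nontrivial data or constraints (the other four will be degenerate-enough that the pentagon condition reduces to a triviality, or to something already implied), and for those five I would substitute the face assignments into the nerve's pentagon diagram. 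After rewriting the vertices and edges of that pentagon using the identifications $\alpha = f(a)$, $\lambda \leftrightarrow f(\ell)$, $\rho \leftrightarrow f(r)$, and using the monoidal-category coherence for the degeneracy constraints of the nerve (which relate $\mathrm{Cat}$'s associator and unitors for cartesian product --- all of which are identities since the product is strict), each pentagon should collapse precisely onto one of the diagrams (5.1)--(5.5). The identification of which $4$-simplex gives which axiom is the bookkeeping core of the proof.

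The main obstacle will be this dimension-$4$ bookkeeping: correctly listing the nine non-degenerate $4$-simplices of $\mathbb C$, writing out their faces, and --- crucially --- carrying out the coordinate substitutions that turn the abstract pentagon of the nerve into a Mac~Lane pentagon or triangle. One subtlety is that the nerve's pentagon is stated with respect to the \emph{chosen} representatives $A_{ijk}$ of its $2$-simplices, so where a face of a non-degenerate $4$-simplex of $\mathbb C$ is degenerate, one must substitute the appropriate degeneracy ($s_0(c), s_1(c), s_0(t)$, etc.), whose image is a unit- or associativity-constraint expression; keeping track of which degeneracy appears where, and simplifying via the strictness of cartesian product, is exactly the "short calculation with the axioms for a monoidal category" that occurred already in the monoid proof, now done five times over. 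A second subtlety, to be dispatched once at the start, is the set-up observation that both $\mathrm N_p(\mathrm{Cat})$ and $\mathrm N_\ell(\mathrm{Cat})$ really are $4$-coskeletal --- which holds because a $4$-simplex is uniquely determined by its five faces (it either is, or is not, a commuting pentagon), so every $n$-boundary for $n > 4$ has a unique filler just as in the monoidal-nerve case. Granting all this, the correspondence of data and of axioms is a bijection, pseudo matching monoidal and lax matching skew-monoidal, and the proposition follows; the pseudo case is then literally the invertible specialisation of the lax one, so it suffices to treat the lax nerve and remark that requiring $\alpha, \lambda, \rho$ invertible on one side corresponds to requiring the $3$-cells of the nerve invertible on the other.
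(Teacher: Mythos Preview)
Your overall plan is right and matches the paper's argument, but your treatment of the $3$-simplex $k$ is off, and this error propagates into your expectations about dimension~$4$.

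In the monoid case, $f(k)$ was a \emph{condition} (commutativity of a square) and could be shown to hold automatically. Here, a $3$-simplex of $\mathrm N_\ell(\mathrm{Cat})$ is a natural transformation, so $f(k)$ is genuine \emph{data}. Computing its boundary from $k = (i, s_1(c), s_0(c), i)$ shows its single component is a map $\kappa_\star \colon I \to I$, not $I \otimes I \to I \otimes I$ as you wrote: the two outer faces $s_1(c)$ and $s_0(c)$ become the projection isomorphisms $1 \times \CA \cong \CA$ and $\CA \times 1 \cong \CA$, and the composite on each side reduces to $I$ itself. Nothing at the $3$-simplex level forces $\kappa_\star = 1_I$; this is an extra datum that must be carried into the correspondence and eliminated later.

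Consequently your predicted split of the nine non-degenerate $4$-simplices---five giving axioms, four trivial---is wrong. What actually happens: four of them give axioms (5.1)--(5.4); one of them (with all faces among $k$ and degeneracies of $i$) yields the pentagon asserting $\kappa_\star = 1_I$; two more both yield axiom (5.5), redundantly; and the remaining two are trivially satisfied once $\kappa_\star = 1_I$. So the elimination of the spurious datum $\kappa_\star$ is itself one of the nine pentagon conditions, not something you can dispatch beforehand by appealing to (5.2) or (5.5). Once you correct this, the rest of your bookkeeping plan is exactly what the paper does.
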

\begin{proof}
 First we prove the second statement.  Since $\mathrm N_\ell(\mathrm{Cat})$
  is $4$-coskeletal, a simplicial map into it is uniquely determined
  by where it sends non-degenerate simplices of dimension $\leqslant
  4$. In dimensions $\leqslant 3$, to give $f \colon \mathbb C \to
  \mathrm N_\ell(\mathrm{Cat})$ is to give:
\begin{itemize}
  \item In dimension $0$, no data: $f$ must send $\star$ to $\star$;
  \item In dimension $1$, a small category $\CA = f(c)$;
  \item In dimension $2$, a functor $\otimes = f(t) \colon \CA \times \CA
    \to \CA$ and an object $I \in \CA$ picked out by the functor $f(i)
    \colon 1 \times 1 \to \CA$;
  \item In dimension $3$, natural transformations
    \begin{equation*}
      \def\xab{\CA}
      \def\xbc{\CA}
      \def\xcd{\CA}
      \def\xac{\CA}
      \def\xad{\CA}
      \def\xbd{\CA}
      \def\xbcd{\otimes}
      \def\xacd{\otimes}
      \def\xabd{\otimes}
      \def\xabc{\otimes}
      \def\xabcd{\alpha}
      f(a) = \laxnerve
    \end{equation*}
   \begin{equation*}
      \def\xab{\CA}
      \def\xbc{1}
      \def\xcd{1}
      \def\xac{\CA}
      \def\xad{\CA}
      \def\xbd{\CA}
      \def\xbcd{f(i)}
      \def\xacd{\cong}
      \def\xabd{\otimes}
      \def\xabc{\cong}
      \def\xabcd{\lambda}
      f(\ell) = \laxnerve
    \end{equation*}    \begin{equation*}
      \def\xab{1}
      \def\xbc{1}
      \def\xcd{\CA}
      \def\xac{\CA}
      \def\xad{\CA}
      \def\xbd{\CA}
      \def\xbcd{\cong}
      \def\xacd{\otimes}
      \def\xabd{\cong}
      \def\xabc{f(i)}
      \def\xabcd{\rho}
      f(r) = \laxnerve
    \end{equation*}
    \begin{equation*}
      \def\xab{1}
      \def\xbc{1}
      \def\xcd{1}
      \def\xac{\CA}
      \def\xad{\CA}
      \def\xbd{\CA}
      \def\xbcd{f(i)}
      \def\xacd{\cong}
      \def\xabd{\cong}
      \def\xabc{f(i)}
      \def\xabcd{\kappa}
      f(k) = \laxnerve
    \end{equation*}
    which are equally well natural families $\alpha$, $\lambda$ and
    $\rho$ as in~\eqref{eq:data} together with a map $\kappa_\star
    \colon I \to I$.
  \end{itemize}
  So the data in dimensions $\leqslant 3$ for a simplicial map
  $\mathbb C \to \mathrm N_\ell(\mathrm{Cat})$ is the data
  $(\CA,\otimes,I,\alpha,\lambda,\rho)$ for a small skew-monoidal
  category augmented with a map $\kappa_\star \colon I \to I$ in
  $\CA$. It remains to consider the action on non-degenerate
  $4$-simplices of $\mathbb C$. There are nine such, given by:
\begin{equation*}
\begin{tabular}{ l l l }
$A1 = (\aa,\aa,\aa,\aa,\aa)$ \hspace{0.7cm} &
$A6 = (s_0(\eye),\ll, \kk, \rr, s_2(\eye))$ \\
$A2 = (\rr,s_1(\tee),\aa,s_1(\tee),\ll)$ &
$A7 = (\kk,\ll, s_0s_1(c), \rr, \kk)$  \\
$A3 = (\ll,\ll,s_2(\tee),\aa,s_2(\tee))$ &
$A8 = (\rr, s_1(\tee), s_0(\tee), \rr, \kk)$ \\
$A4 = (s_0(\tee),\aa,s_0(\tee),\rr,\rr)$ \hspace{0.7cm} &
$A9 = (\kk, \ll, s_2(\tee), s_1(\tee), \ll)$ \\
$A5 = (s_1(\eye),s_2(\eye), \kk, s_0(\eye), s_1(\eye))$ &
\end{tabular}
\end{equation*}
where as before, we take advantage of coskeletality of $\mathbb C$ to
identify a $4$-simplex with its tuple of $3$-dimensional faces.
The images of these simplices each assert the commutativity of a
pentagon of natural transformations involving $\alpha$, $\rho$,
$\lambda$ or $\kappa$; explicitly, they assert that for any $A,B,C,D \in \CA$,
the following pentagons commute in $\CA$:
\begin{equation*}
\!\!\!
\def\xa{((AB)C)D}
\def\xc{(AB)(CD)}
\def\xe{A(B(CD))}
\def\xb{\llap{$(A(B$}C))D}
\def\xd{A((B\rlap{$C)D)$}}
\def\xo{\textstyle\text{(A1)}}
\def\xac{\alpha}
\def\xce{\alpha}
\def\xab{\alpha 1}
\def\xbd{\alpha}
\def\xde{1 \alpha}
\pent[0.1cm]
\ 
\def\xa{AB}
\def\xc{AB}
\def\xe{AB}
\def\xb{(AI)B}
\def\xd{A(IB)}
\def\xo{\textstyle\text{(A2)}}
\def\xac{1}
\def\xce{1}
\def\xab{\rho 1}
\def\xbd{\alpha}
\def\xde{1 \lambda}
\pent[0.1cm]
\ 
\def\xa{(IA)B}
\def\xc{I(AB)}
\def\xe{AB}
\def\xb{AB}
\def\xd{AB}
\def\xo{\textstyle\text{(A3)}}
\def\xac{\alpha}
\def\xce{\lambda}
\def\xab{\lambda 1}
\def\xbd{1}
\def\xde{1}
\pent[0.1cm]
\end{equation*}
\begin{equation*}
\def\xa{AB}
\def\xc{(AB)I}
\def\xe{A(BI)}
\def\xb{AB}
\def\xd{AB}
\def\xo{\textstyle\text{(A4)}}
\def\xac{\rho}
\def\xce{\alpha}
\def\xab{1}
\def\xbd{1}
\def\xde{1\rho}
\pent[0.1cm]\ 
\def\xa{I}
\def\xc{I}
\def\xe{I}
\def\xb{I}
\def\xd{I}
\def\xo{\textstyle\text{(A5)}}
\def\xac{1}
\def\xce{1}
\def\xab{1}
\def\xbd{\kappa_\star}
\def\xde{1}
\pent[0.1cm]\ 
\def\xa{I}
\def\xc{II}
\def\xe{I}
\def\xb{I}
\def\xd{I}
\def\xo{\textstyle\text{(A6)}}
\def\xac{\rho_I}
\def\xce{\lambda_I}
\def\xab{1}
\def\xbd{\kappa_\star}
\def\xde{1}
\pent[0.1cm]
\end{equation*}
\begin{equation*}
\def\xa{I}
\def\xc{II}
\def\xe{I}
\def\xb{I}
\def\xd{I}
\def\xo{\textstyle\text{(A7)}}
\def\xac{\rho_I}
\def\xce{\lambda_I}
\def\xab{\kappa_\star}
\def\xbd{1}
\def\xde{\kappa_\star}
\pent[0.1cm]\ 
\def\xa{A}
\def\xc{AI}
\def\xe{AI}
\def\xb{AI}
\def\xd{AI}
\def\xo{\textstyle\text{(A8)}}
\def\xac{\rho}
\def\xce{1}
\def\xab{\rho}
\def\xbd{1}
\def\xde{1\kappa}
\pent[0.1cm]\ 
\def\xa{IA}
\def\xc{IA}
\def\xe{A\rlap{ .}}
\def\xb{IA}
\def\xd{IA}
\def\xo{\textstyle\text{(A9)}}
\def\xac{1}
\def\xce{\lambda}
\def\xab{\kappa 1}
\def\xbd{1}
\def\xde{\lambda}
\pent[0.1cm]
\end{equation*}
Note first that (A5) forces $\kappa_\star = 1_I \colon I \to I$. Now
(A1)--(A4) express the axioms (5.1)--(5.4), both (A6) and (A7) express
axiom (5.5), whilst (A8) and (A9) are trivially satisfied. Thus the
$4$-simplex data of a simplicial map $\mathbb C \to
\mathrm{N}_\ell(\mathrm{Cat})$ exactly express the skew-monoidal
axioms and the fact that the additional datum $\kappa_\star \colon I
\to I$ is trivial; whence a simplicial map $\mathbb C \to
\mathrm{N}_\ell(\mathrm{Cat})$ is precisely a small skew-monoidal
category.

The same proof now shows that a simplicial map $\mathbb C \to
\mathrm{N}_p(\mathrm{Cat})$ is precisely a small monoidal category,
under the identification of monoidal categories with skew-monoidal
categories whose constraint maps are invertible.
\end{proof}

\end{document}